\newacronym{fosf}{FOSF}{finite-dimensional observer-based state feedback}
\newacronym{bc}{BC}{boundary condition}
\newacronym{rs}{RS}{Riesz-spectral}
\newacronym{bcs}{BCS}{boundary control system}
\newacronym{bos}{BOS}{system with boundary observation}
\newacronym{ode}{ODE}{ordinary differential equation}
\newcommand{\SysOp}{\Sigma}
\newcommand{\SysOpObs}{\SysOp_{\text{obs}}}
\newcommand{\EA}{\mathcal{A}}
\newcommand{\EB}{\mathcal{B}}
\newcommand{\EC}{\mathcal{C}}
\newcommand{\EK}{\mathcal{K}}
\newcommand{\EL}{\mathcal{L}}
\newcommand{\BA}{\mathfrak{A}}
\newcommand{\BB}{\mathfrak{B}}
\newcommand{\BC}{\mathfrak{C}}
\newcommand{\BD}{\mathfrak{D}}
\newcommand{\BK}{\mathfrak{K}}
\newcommand{\BR}{\mathfrak{R}}
\newcommand{\BG}{\varkappa}
\newcommand{\OA}{\mathscr{A}}
\newcommand{\OC}{\mathscr{C}}
\newcommand{\OK}{\mathscr{K}}
\newcommand{\OR}{\mathscr{R}}
\newcommand{\MA}{A}
\newcommand{\MB}{B}
\newcommand{\MC}{C}
\newcommand{\MK}{K}
\newcommand{\ML}{L}
\newcommand{\mb}{b}
\newcommand{\OKc}{\breve{\OK}}
\newcommand{\BKc}{\breve{\BK}}
\newcommand{\BKu}{\mathring{\BK}}
\newcommand{\OKu}{\mathring{\OK}}
\newcommand{\Kb}{\EK}
\newcommand{\Lu}{\mathring{\EL}}
\newcommand{\Lb}{\EL}
\newcommand{\ctrlIsText}{\text{\rm{c}}}
\newcommand{\obsIsText}{\text{\rm{o}}}
\newcommand{\desSText}{\text{\rm{d}}}
\newcommand{\desCsText}{\text{\rm{dc}}}
\newcommand{\desOsText}{\text{\rm{do}}}
\newcommand{\clSText}{\text{\rm{cl}}}
\newcommand{\ctrlIs}[2][]{#2^{\ctrlIsText #1}}
\newcommand{\obsIs}[2][]{#2^{\obsIsText #1}}
\newcommand{\desS}[2][]{#2^{\desSText #1}}
\newcommand{\desCs}[2][]{#2^{\desCsText #1}}
\newcommand{\desOs}[2][]{#2^{\desOsText #1}}
\newcommand{\clS}[2][]{#2^{\clSText #1}}
\newcommand{\Ss}{\mathcal{X}}
\newcommand{\Is}{\mathcal{U}}
\newcommand{\Os}{\mathcal{Y}}
\newcommand{\Nats}{\mathbb{N}}
\newcommand{\Reels}{\mathbb{R}}
\newcommand{\Compl}{\mathbb{C}}
\newcommand{\LOp}{\mathscr{L}}
\newcommand{\Lz}{L^{2}}
\newcommand{\lz}{l^{2}}
\newcommand{\li}{l^{\infty}}
\newcommand{\Hn}[1]{H^{#1}}
\newcommand{\eval}{\lambda}
\newcommand{\evec}{\varphi}
\newcommand{\evecu}{\varphi_{\uu}}
\newcommand{\evecy}{\varphi_{\yy}}
\newcommand{\evecI}{\evec_{\text{I}}}
\newcommand{\evecII}{\evec_{\text{II}}}
\newcommand{\dual}[3][]{\langle #2 , #3 \rangle_{D(\EA^*_{#1})}}
\newcommand{\dualf}[3]{\langle #1 , #2 \rangle_{D(#3)}}
\newcommand{\dualo}[3][]{\langle #2 , #3 \rangle_{D(\obsIs[*]{\EA}_{#1})}}
\newcommand{\dualdc}[2]{\langle #1 , #2 \rangle_{D(\desCs[*]{\EA})}}
\newcommand{\scal}[2]{\langle #1 , #2 \rangle_{\Ss}}
\newcommand{\scalf}[3]{\langle #1 , #2 \rangle_{#3}}
\newcommand{\s}{x}
\newcommand{\ms}{p}
\newcommand{\uu}{u}
\newcommand{\yy}{y}
\newcommand{\msh}{\hat{\ms}}
\newcommand{\sh}{\hat{\s}}
\newcommand{\yh}{\hat{\yy}}
\newcommand{\st}{\tilde{\s}}
\newcommand{\yt}{\tilde{\yy}}
\newcommand{\sw}{w}
\newcommand{\ca}{\alpha}
\newcommand{\cb}{\beta}
\newcommand{\cc}{\gamma}
\newcommand{\kappac}{\kappa_{\text{c}}}
\newcommand{\kappao}{\kappa_{\text{o}}}
\newcommand{\muc}{\mu_{\text{c}}}
\newcommand{\muo}{\mu_{\text{o}}}
\newcommand{\soc}{\xi}
\newcommand{\sfc}{\nu}
\newcommand{\fo}{\chi}
\newcommand{\thu}{\theta_{-}}
\newcommand{\acc}{a}
\newcommand{\ku}{k_\uu}
\newcommand{\ky}{k_\yy}
\newcommand{\cu}{c_\uu}
\newcommand{\cy}{c_\yy}
\newcommand{\cuh}{\hat{c}_\uu}
\newcommand{\cyh}{\hat{c}_\yy}
\newcommand{\MBz}{\MB_2}
\newcommand{\MBe}{\MB_1}
\newcommand{\MBn}{\MB_0}
\newcommand{\MGz}{G_2}
\newcommand{\MGe}{G_1}
\newcommand{\MGn}{G_0}
\newcommand{\diff}[1]{\partial_{#1}}
\newcommand{\CN}{$\text{C}_{0}$}
\newcommand{\Transpose}{\mathsf{T}}
\newcommand{\Resol}{R}
\newcommand{\Id}{\mathcal{I}}
\newcommand{\cconj}[1]{\overline{#1}}
\newcommand{\Dirac}[1]{\delta_{#1}}
\newcommand{\homm}{\text{hom}}
\newcommand{\inh}{\text{inh}}
\begin{document}
\begin{frontmatter}

\title{Late lumping of observer-based state feedback for boundary control systems\thanksref{footnoteinfo}}
\thanks[footnoteinfo]{This work has been submitted to IFAC for possible publication.}

\author[First]{Marcus Riesmeier} 
\author[First]{Frank Woittennek} 

\address[First]{Institute of Automation and Control Engineering, UMIT Tirol, Private University for Health Sciences and Technology, Eduard-Wallnöfer-Zentrum 1, 6060 Hall in Tirol, Austria (e-mail: \{marcus.riesmeier, frank.woittennek\}@umit-tirol.at).}

\begin{abstract}                
Infinite-dimensional linear systems with unbounded input and output operators are considered. For the purpose of finite-dimensional observer-based state feedback, an observer approximation scheme will be developed which can be directly combined with existing late-lumping controllers and observer output injection gains. It relies on a decomposition of the feedback gain, resp.\ observer output injection gain, into a bounded and an unbounded part. Based on a perturbation result, the spectrum-determined growth condition is established, for the closed loop.
\end{abstract}

\begin{keyword}
Infinite-dimensional systems (linear case); Output feedback control (linear case); Stability of distributed parameter systems
\end{keyword}

\end{frontmatter}

\section{Introduction}
\label{sec:intro}

Infinite-dimensional linear systems with unbounded input and output operators are considered.
The purpose of the article is to combine feedback gain and observer gain approximations to \gls{fosf}.
Therefore, this article addresses the approximation of the observer.
Previous results in this direction come e.g. from \citep{Deutscher2013IJC,GrueneMeurer2022,Curtain1984siam}.

Using the perturbation result from~\cite{XuSallet1996siam},
it will be proven that the closed-loop system is
a discrete \gls{rs} system.
Hence, the stability can be checked by
computing the eigenvalues of the closed-loop operator.
To the authors' knowledge, this is the first proof in this context,
which covers both analytic and hyperbolic systems.
For analytic systems~\cite{Curtain1984siam} has proven a similar result,
using a perturbation result from~\cite{Kato1995}.

The article is organized as follows. In Section~\ref{sec:prelim} the system
class will be introduced. In Section~\ref{sec:late_lumping} the structure of the observer, and the
feedback and observer gain approximations will be stated.
In Section~\ref{sec:approx} the approximation scheme for the observer will be derived.
Section~\ref{sec:conclusion} summarizes the article.

\section{Preliminaries}
\label{sec:prelim}
Within this section the notation and the structural
properties of the systems and designs under
consideration will be introduced.

\subsection{Basic notation}
The complex conjugate of a complex number $c\in\Compl$ is denoted by
 $\cconj{c}$.
Moreover, $\Lz(a,b;\Compl^n)$ denotes
the Lebesgue space of square-integrable functions $f:[a,b]\to\Compl^n$, $z\mapsto f(z)$,
while $\Hn{n}(a,b;\Compl^n)$ is the usual Sobolev space of $n$ times weakly  differentiable (in $\Lz(a,b;\Compl^n)$) functions on $[a,b]$ taking values in $\Compl^n$. 

The partial derivative of order $n\in\Nats$ w.r.t.\ a variable $z$ is denoted by $\diff{z}^n$.
Throughout this paper, $t\in\Reels$ stands exclusively for the time variable, the first  (partial) derivative w.r.t.\ $t$ of a function $h$ is abbreviated by $\dot{h}$.
For  two Banach spaces  $\mathcal{M}$ and $\mathcal{N}$, $\LOp(\mathcal{M}, \mathcal{N})$ denotes the
Banach space of linear bounded operators $\mathcal{M}\to\mathcal{N}$.

Let $\Ss$ denote a separable Hilbert space and
$\EA:\Ss\rightarrow \Ss$ a linear operator,
which is not necessarily bounded on $\Ss$.
The spectrum and the point spectrum of $\EA$ are denoted by $\sigma(\EA)$ and $\sigma_p(\EA)$, respectively.
Furthermore, $\{\eval_i\}$ denotes the
sequence of eigenvalues of $\EA$ and $\{\evec_i\}$ the corresponding sequence of
eigenvectors. The adjoint operator of $\EA$ is denoted by~$\EA^*$, with
eigenvalues~$\{\eval_i^*\}$ and eigenvectors $\{\evec_i^*\}$.
The resolvent of $\EA$ is denoted by $\Resol(\EA,\eval)=(\eval I - \EA)^{-1}$, $\eval\not\in\sigma(\EA)$,
with $I$ the identity.
Moreover, $D(\EA)$ is the domain of $\EA$ and
$D(\EA)'$ is the dual space of $D(\EA)$. These spaces 
 are equipped with the graph norm and
the corresponding dual norm, respectively.
The duality pairing in $D(\EA^*)$ is denoted by
$\dual{F}{g},\,F\in D(\EA^*)',\,g\in D(\EA^*)$ and
the scalar product in $\Ss$ is denoted by
$\scalf{f}{g}{\Ss},\,f,g\in\Ss$. The scalar product as well as the duality
pairing take complex conjugation on the second argument.
The space of square-summable sequences and the space of bounded
sequences are denoted by $\lz$ and $\li$, respectively. 
Finally, $\Dirac{r}(\cdot)$ is the Dirac delta distribution centered in $r\in\Reels$.

\subsection{System structure}
\label{sec:prelim:sys_struct}
Boundary control systems~\citep{Fattorini1968siam} with boundary observation are
considered.
They are of the form\footnote{Note that any system given in the seemingly more general form $\dot{\s}(t) = {\BA} \s(t) + \BB\uu(t),\,\BB\in\Ss$, \eqref{eq:bc_system_input},  $\yy(t) = {\BC} \s(t) + \BD\uu(t),\,\BD\in\Compl$ can be restated as $\dot{\s}(t) = ({\BA} + \BB\BR)\s(t),\,\BB\in\Ss$, \eqref{eq:bc_system_input},  $\yy(t) = ({\BC}+\BD\BR) \s(t)$ and is, therefore, covered 
by \eqref{eq:bc_system}.}
\begin{subequations}
\label{eq:bc_system}
\begin{align}
\label{eq:bc_system_dynamics}
\dot{\s}(t) &= \BA \s(t), && \s(0) = \s_0 \in \Ss \\
\label{eq:bc_system_input}
\uu(t) &= \BR \s(t) \\
\label{eq:bc_system_output}
\yy(t) &= \BC \s(t)
\end{align}
\end{subequations}
with state $\s(t)\in\Ss$, input $\uu(t)\in\Is=\Compl$
and output $\yy(t)\in\Os=\Compl$, cf.~\citep{Fattorini1968siam}.
The state space $\Ss$ is a separable Hilbert space and
$\BA:\Ss\supset D(\BA) \rightarrow \Ss$,
$\BR:\Ss\supset D(\BA) \rightarrow \Is$ and
$\BC: \Ss\supset D(\BA)\rightarrow\Os$ are unbounded operators on $\Ss$.

It is convenient to consider~\eqref{eq:bc_system} also in the form
\begin{align}
  \label{eq:sigma_system}
(\dot{\s}(t), \yy(t)) &= \SysOp \left(\s(t),\uu(t)\right), && \s(0) = \s_0 \in \Ss,
\end{align}
where
$\SysOp:\Ss\times\Is \supset D(\SysOp) \rightarrow \Ss\times\Os$
is unbounded
and $D(\SysOp)=\{ (h_\s,h_\uu)\in D(\BA)\times\Is \,|\, \BR h_{\s}=h_{\uu} \}$
is dense in $\Ss\times\Is$.

  System~\eqref{eq:bc_system} (resp.\ \eqref{eq:sigma_system}) is called a \gls{bos},
  if the adjoint system
  \begin{align}
    \label{eq:dual_sys}
  \left(\dot\s^*(t),\uu^*(t)\right) = \SysOp^*\left(\s^*(t), \yy^*(t)\right)
  \end{align}
  with
  input $\yy^*(t)\in\Os$ and output $\uu^*(t)\in\Is$ is a \gls{bcs}, i.e.,
there exist operators
\begin{align*}
\OA:D(\OA)\to \Ss, && \OR:D(\OA)\to \Os, && \OC:D(\OA)\to \Is
\end{align*}
with similar properties
as  $\BA$, $\BR$, $\BC$ such that
\begin{multline*}
\SysOp^*(\s^*(t),\yy^*(t))=(\OA \s^*(t),\OC \s^*(t)), \\
    D(\SysOp^*)=\{(h_{\s^*},h_{\yy^*})\in D(\OA)\times\Os \,|\, \OR h_{\s^*}=h_{\yy^*} \}.
\end{multline*}

For a unified treatment of the controller and observer design, a reformulation of
\eqref{eq:sigma_system} (resp.\ \eqref{eq:bc_system}) as evolution equation
\begin{subequations}
\label{eq:system}
\begin{align}
\label{eq:system_dynamics}
\dot{\s}(t) &= \EA \s(t) + \EB \uu(t), && \s(0) = \s_0 \in \Ss,
\end{align}
\end{subequations}
as described in~\cite[Chapter 3]{BensoussanPratoDelfourMitter2007}, is considered.
Therein, the system operator $\EA:\Ss\supset D(\EA) \rightarrow \Ss$
and the input operator $\EB\in\LOp(\Is, D(\EA^*)')$
are defined by
the following relations: 
\begin{subequations}
  \label{eq:bc_ev_relation}
\begin{align}
&D(\EA) = \{h \in D(\BA) \,| \, \BR h = 0 \} &&\\
&\EA h = \BA h,&&\forall\, h\!\in\! D(\EA) \\
  \label{eq:bc_ev_relation_input}
&\dualf{\EB}{h}{\EA^{*}}=\scalf{\left(0, 1\right)}{\SysOp^*(h,0)}{\Ss\times\Is}, && \forall\,h\!\in\! D(\EA^*).
\end{align}
\end{subequations}

Similarly, the adjoint system \eqref{eq:dual_sys} is associated with
\begin{subequations}
\label{eq:system_dual}
\begin{align}
\label{eq:system_dual_dynamics}
\dot{\s}^*(t) &= \EA^* \s^*(t) + \EC^* \yy^*(t), && \s^*(0) = \s^*_0 \in \Ss
\end{align}
\end{subequations}
where
\begin{align*}
&D(\EA^*) = \{h \in D(\OA) \, | \, \OR h = 0 \} &&\\
&\EA^* h= \OA h,&&\forall\,h\in D(\EA^*)\\
&\dualf{\EC^*}{h}{\EA}=\scalf{\left(0, 1\right)}{\SysOp (h,0)}{\Ss\times\Os}, &&\forall\,h\in D(\EA).
\end{align*}

While $\yy(t)=\EC\s(t)=\dualf{\EC^*}{\s(t)}{\EA}$ holds for the autonomous system ($\uu(t)=0$),
for the actuated system $\yy(t)\neq\EC\s(t)$ in general case, compare~\citep{Weiss94p1}. Therefore, throughout this paper the output
equation~\eqref{eq:bc_system_output} is used to determine the output of the system.

Throughout this contribution, $\EA$ is assumed to be the infinitesimal generator 
of a \CN-semigroup on $\Ss$, while both  $\EB$ and $\EC^*$ are not required to be admissible\footnote{
Instead of admissibility of the input and output operators admissibility of
the feedback and observer gain operators is required within this contribution, cf.~\citep{Rebarber1989ieee}.}
in the sense of~\citep{TucsnakWeiss2009}.

\subsubsection{Example system.}
\label{sec:example_system}
The following example system will be used in Section~\ref{sec:approx}
for the application of the proposed approximation scheme.
Therefore, for this example, in Section~\ref{sec:late_lumping}
also the corresponding feedback, observer gain and observer approximations are given.

Consider the hyperbolic system
\begin{subequations}
\label{eq:ex_sys}
\begin{align}
\begin{split}
\label{eq:ex_dgl1}
\diff{t}\sw_1(z,t) &= \ca \diff{z}\sw_2(z,t)
\end{split} \\
\begin{split}
\label{eq:ex_dgl2}
\diff{t}\sw_2(z,t) &= \cb \diff{z}\sw_1(z,t)
\end{split} \\
\begin{split}
\label{eq:ex_dgl3}
\diff{t}\sw_3(t) &= \cc \sw_2(0,t)
\end{split}
\end{align}
with \glspl{bc}
\begin{align}
\label{eq:ex_bc}
\sw_3(t) = \sw_1(0,t), && \uu(t) = \sw_2(1,t),
\end{align}
\end{subequations}
input $\uu(t)$, and output $\yy(t) = \sw_1(1,t)$. This model can be used
to describe the linearized dynamics of an undamped pneumatic system \citep{Gehring2018mathmod}.
With state
\[\s(t)=(\sw_1(\cdot,t),\sw_2(\cdot,t),\sw_3(t))\in\Ss=\Lz(0,1;\Compl^2)\times\Compl,\]
\eqref{eq:ex_sys} can be written in the form \eqref{eq:bc_system} with
$\BR h \! =  \! h_2(1)$,
$\BC h \! =  \! h_1(1)$,
$\BA h \! =  \! (\ca\diff{z}h_2,\, \cb\diff{z}h_1,\, \cc h_2(0))$, $(h_1,h_2,h_3)\in D(\BA)$,
\begin{align*}
D(\BA)  \! &=  \! \{(h_1, h_2, h_3) \!\in \!\Ss|\diff{z}h_1, \diff{z}h_2\!\in\!\Lz(0,1),h_3\!=\!h_1(0)\}
\end{align*}
or in state space representation~\eqref{eq:system} with $\EB = (\ca\,\Dirac{1}(\cdot),0,0)$,
\begin{align*}
\EA\,h &= \BA\,h, \,\, h\in D(\EA) = \{(h_1, h_2, h_3) \in D(\BA)\,|\,h_2(1)=0\}.
\end{align*}
\section{Design and approximation}
\label{sec:late_lumping}
The aim of the paper is to connect the approximation schemes, proposed in
\citep{RiesmeierWoittennek2022arxiv1}, for state feedback and observer output-injection,
to an approximation scheme for \gls{fosf}.
Therefore, the basic approximation schemes from there will be briefly summarized
in the following. Based on this, a representation of the observer will be derived, which
serves as the basis for the approximation in Section~\ref{sec:approx}.

\subsection{Feedback design and approximation}
\label{sec:late_lumping_coc}
As described in~\citep{RiesmeierWoittennek2022arxiv1} the control law
\begin{align*}
  \uu(t) = \BKc\s(t) + \BG\,\desCs{\uu}(t),
\end{align*}
with arbitrary non-zero $\BG\in\Reels$ and feedback gain
\begin{align*}
  \BKc = \BR - \BG\desCs{\BR},
\end{align*}
assigns the desired dynamics $\desCs{\EA}$
to the closed loop
\begin{subequations}
\begin{align}
\dot{\s}(t) &= \BA \s(t), && \s(0) = \s_0 \in \Ss \\
\desCs{\uu}(t) &= \desCs{\BR} \s(t) \\
\yy(t) &= \BC \s(t).
\end{align}
\end{subequations}
For the \gls{bcs} $(\BA,\desCs{\BR},\BC)$
the system operators $(\desCs{\SysOp},\desCs{\EA},$ $\desCs{\EB})$ can be derived in the same way
as $({\SysOp},{\EA}, {\EB})$
are derived from $(\BA,{\BR},\BC)$, c.f.\ Section~\ref{sec:prelim:sys_struct}.

Since only the bounded part of the feedback is subject to approximation,
the feedback gain requires a decomposition
\begin{equation}
  \label{eq:fb_decomp}
  \uu(t) = \BKu\s(t) + \Kb\s(t) + \BG\,\desCs{\uu}(t)
\end{equation}
into an unbounded part $\BKu\in\LOp(D(\BA),\Is)$ and a bounded
part $\Kb\in\LOp(\Ss,\Is)$.
Therewith, the controller intermediate system
\begin{subequations}
\begin{align}
\dot{\s}(t) &= \BA \s(t), && \s(0) = \s_0 \in \Ss \\
\ctrlIs{\uu}(t) &= \ctrlIs{\BR} \s(t), && \ctrlIs{\BR} = \BR - \BKu \\
\yy(t) &= \BC \s(t)
\end{align}
\end{subequations}
can be introduced by defining
the respective input by
\begin{align*}
\ctrlIs{\uu}(t) = \Kb\s(t) + \BG\,\desCs{\uu}(t).
\end{align*}
As described above, the controller intermediate system operators
$(\ctrlIs{\SysOp}, \ctrlIs{\EA}, \ctrlIs{\EB})$ can be derived from
$(\BA,\ctrlIs{\BR},\BC)$.

For the results of this article, approximations with respect to the
eigenvectors $\{\obsIs{\evec}_1\}_1^n$ and $\{\obsIs[*]{\evec}_1\}_1^n$
of $\obsIs{\EA}$ resp. $\obsIs[*]{\EA}$ (to be introduced in the next subsection), are considered.
As described in~\citep{RiesmeierWoittennek2022arxiv1} $\Kb$ can now be approximated:
\begin{align}
  \label{eq:fb_approx}
\Kb^{n}&= \sum_{i=1}^{n}\scal{\cdot}{\obsIs[*]{\evec}_i}\, \obsIs{k}_i \,\in \,\LOp(\Ss,\Is), &&  \obsIs{k}_i = \Kb\obsIs{\evec}_i.
\end{align}

\subsubsection{State-feedback approximation for the example system.}
\label{sec:example_ctrl}
According to~\citep{RiesmeierWoittennek2022arxiv1}, the feedback gain which assigns the desired dynamics of the delay differential equation
\begin{align*}
\dot\fo(t+\tau)+\muc\,\dot\fo(t-\tau) + \kappac(\fo(t+\tau)+\muc\,\fo(t-\tau)) = 0,
\end{align*}
to the closed loop (approximately), is given by
  $\uu(t) = \BKu\s(t) + \Kb^n\s(t)$,
where the unbounded part is reads
\begin{align*}
  \BKu h = \frac{\cb\tau(\muc - 1)}{\muc + 1} h_1(1), && h=(h_1,h_2, h_3)\in D(\BA)
\end{align*}
and the bounded part $\Kb^n$, defined by~\eqref{eq:fb_approx}, is completed by the gains
$\{\obsIs{k}_i = \obsIs{\evec}_{i,3} \Kb_\sfc \obsIs{\evec}_\sfc \, | \, \obsIs{\evec}_\sfc=\theta\mapsto e^{\obsIs{\eval}_i \theta}\}_1^n$,
with
\begin{align*}
  \Kb_\sfc h &= \frac{\cb\cc\tau - \kappac}{\cc(\muc + 1)}h(\tau)
+\frac{\muc(\cb\cc\tau+\kappac)}{\cc(\muc + 1)}h(-\tau), && h\in\Ss_\sfc.
\end{align*}

\subsection{Observer design and approximation}
\label{sec:late_lumping_ooc}
In contrast to the observer gain approximation scheme given in~\citep{RiesmeierWoittennek2022arxiv1},
now an observer for the actuated system will be derived. Therefore, similarly to the controller intermediate
system $\ctrlIs{\SysOp}$ and the desired controller system $\desCs{\SysOp}$, in the following the
observer intermediate system $\obsIs{\SysOp}$ and the desired observer system
$\desOs{\SysOp}$ will be introduced.

Starting from the adjoint system~\eqref{eq:dual_sys} with system operator $\Sigma^*$,
one can state, analogously to Section~\ref{sec:late_lumping_coc}, the control law
\begin{align*}
  \yy^*(t) = \OKc\s^*(t) + \BG\,\desOs[*]{\yy}(t),
\end{align*}
with feedback gain
\begin{align*}
  \OKc = \OR - \BG\desOs{\OR}.
\end{align*}
This assigns the desired dynamics $\desOs{\EA}$ to the adjoint system.
As for the controller design, using the decomposition
\begin{align*}
  \yy^*(t) = \OKu\s^*(t) + \Lb^*\s^* + \BG\,\desOs[*]{\yy}(t),
\end{align*}
of the feedback gain $\OKc$ into an unbounded part $\OKu\in\LOp(D(\OA),\Os)$ and a bounded
part $\Lb^*\in\LOp(\Ss,\Os)$,
one can define the observer intermediate system operators
$(\obsIs[*]{\SysOp}, \obsIs[*]{\EA}, \obsIs[*]{\EC})$ and the desired operators
$(\desOs[*]{\SysOp}, \desOs[*]{\EA}, \desOs[*]{\EC})$
in terms of $(\OA,\OR,\OC,\OKu,\Lb^*)$. This is conducted in the same way as $(\ctrlIs{\SysOp}, \ctrlIs{\EA}, \ctrlIs{\EB})$
and $(\desCs{\SysOp},\desCs{\EA}, \desCs{\EB})$ are defined, in Section~\ref{sec:late_lumping_coc}, in terms of
$(\BA,\BR,\BC,\BKu,\Kb)$.

For the observer design, the system
\begin{subequations}
  \label{eq:des_cl_sys}
\begin{align}
  \label{eq:des_cl_sys_sys}
(\dot{\s}(t), \yy(t)) &= {\SysOp} \left(\s(t),\uu(t)\right),
\end{align}
is considered, together with the desired observer error system
\begin{align}
  \label{eq:des_err_sys}
  \begin{split}
(\dot{\st}(t), \yt(t)) \! &=  \!\desOs{\SysOp} \left(\st(t),0\right)
 \!= \!  \obsIs{\SysOp} \left(\st(t),0\right)  \!+ \! (\Lb\yt(t), 0),
  \end{split}
\end{align}
\end{subequations}
where $\st = \sh - \s$ is the observer error, $\yt = \yh -  \yy$ is the output error and
$\yh(t)$ is the observer output.
Equation~\eqref{eq:des_err_sys} can be rearranged to
\begin{align}
  \label{eq:observer_sigma}
  \begin{split}
(\dot{\sh}(t), \yh(t))  \!&= \! \obsIs{\SysOp} \left(\st(t),0\right)  \!+  \!{\SysOp} \left(\s(t),\uu(t)\right)  \!+  \!(\Lb\yt(t), 0) \\
 &=  \!\obsIs{\SysOpObs} \left(\sh(t),\uu(t),\yy(t)\right)  \!+  \!(\Lb\yt(t), 0),
  \end{split}
\end{align}
where $\obsIs{\SysOpObs}$ is informally defined by
\begin{align*}
\obsIs{\SysOpObs} \left(\sh(t),\uu(t),\yy(t)\right) =
\obsIs{\SysOp} \left(\st(t),0\right) + {\SysOp} \left(\s(t),\uu(t)\right).
\end{align*}
In fact, $\obsIs{\SysOpObs}$ is again a system with unbounded control action, through $\uu(t)$ and $\yy(t)$.
For a formal definition of $\obsIs{\SysOpObs}$, the adjoint of \eqref{eq:des_cl_sys}, can be employed.
Due to a matter of space, the details of this formal definition can not be included here.
Furthermore, for the purpose of observer approximation, \eqref{eq:observer_sigma}
has the state space representation:
\begin{align}
  \label{eq:ss_obs}
\dot{\sh}(t) = \obsIs{\EA} \sh(t)+\obsIs{\EB}\uu(t)- \Lu \yy(t) + \Lb\yt(t),
\end{align}
where $\obsIs{\EB}$ is the input operator derived from $\obsIs{\SysOp}$, in the same way as $\EB$ from $\SysOp$.
$\Lu$ is the adjoint of the restriction of $\OKu$ to $D(\obsIs[*]{\EA})$: $\Lu^* h = \OKu h$, $h\in D(\obsIs{\EA})$.

As described in~\citep{RiesmeierWoittennek2022arxiv1},
the bounded part $\Lb$, of the observer gain, can now be approximated:
\begin{align}
  \label{eq:obs_fb_approx}
\Lb^{n}&=\sum_{i=1}^n \obsIs{l}_i\,\obsIs{\evec}_i\,\in\,\Ss, && \obsIs{l}_i = \scal{\Lb}{\obsIs[*]{\evec}_i}.
\end{align}

\subsubsection{Observer for the example system.}
\label{sec:example_obs}
According to~\citep{RiesmeierWoittennek2022arxiv1}, the observer~\eqref{eq:observer_sigma} (resp.~\eqref{eq:ss_obs}) that approximately assigns the desired dynamics, of the delay differential equation
\begin{align*}
\dot{\yt}(t+\tau)+\muo\,\dot{\yt}(t-\tau) + \kappao(\yt(t+\tau)+\muo\,\yt(t-\tau)) = 0,
\end{align*}
to the observer error system, is defined by 
\begin{align*}
  \Lu=\rho\obsIs{\EB}, \!&& \obsIs{\EB}=(\ca\,\Dirac{1}(\cdot),0,0)\in D(\obsIs{\EA})', \!&& \rho=\frac{\cb\tau(\muo - 1)}{\muo + 1}
\end{align*}
and the approximation~\eqref{eq:obs_fb_approx} of the bounded part, which is completed by
  $\obsIs{l}_i\!=\!-\cconj{\obsIs[*]{\evec_{\soc,i,2}}(\tau)}\!-\!\dualo[\soc]{\desOs{\acc}}{\obsIs[*]{\evec_{\soc,i}}}$,
with $\desOs{\acc} \!= \!\big(\kappao ( 1 + \muo), \kappao + \muo\,\Dirac{\tau}(\cdot)\big) \in D(\obsIs[*]{\EA}_\soc)'$ and
\begin{align*}
\obsIs[*]{\evec_{\soc,i}}\!&=\!\left(\!1\!+\!\! \int_{\thu}^0 e^{\cconj{\obsIs{\eval}_i}\,\theta}\,\cconj{\obsIs{\eval}_i}\,d\theta\!\right)^{\!\!\!-1}\!\!
  \frac{\cb\tau - \rho}{2\cb\tau}\obsIs[*]{\evec_{\soc,i,3}}\,\,(1,\theta\mapsto e^{\cconj{\obsIs{\eval}_i}\,\theta}\,\cconj{\obsIs{\eval}_i}).
\end{align*}
Above, the observer intermediate system operator $\obsIs{\EA}$ results from the restriction of $\BA$ to \[D(\obsIs{\EA})=\{h\in D(\BA)|\obsIs{\BR} h = 0\},\quad \obsIs{\BR}=\BR-\rho \BC.\]

\subsection{Properties of the involved operators}
The results of this article are restricted to desired operators
$\desS{\EA}\in\{\desCs{\EA}, \desOs{\EA}\}$ which satisfy the following
assumption.
\begin{assum}
  \label{hypo:ad_spec}
  $\desS{\EA}$ has the following spectral properties.
\begin{enumerate}[ref={A\ref{hypo:ad_spec}.\arabic*},label={A\ref{hypo:ad_spec}.\arabic*:},leftmargin={3.9em}]
\item\label{item:spec_riesz} $\desS{\EA}$ is a \gls{rs}
operator~\citep{GuoZwart2001report}.
\item\label{item:spec_disc} $\desS{\EA}$ is a discrete operator \citep{DunfordSchwartz3}.
\item\label{item:spec_simple} The eigenvalues $\{\desS{\eval}_i\}_{i=1}^\infty$  of $\desS{\EA}$ are simple\footnote{Note that Assumption~\ref{item:spec_simple} is a reasonable technical assumption
in order to avoid the introduction of generalized eigenvectors and, this way, simplify computations.}.
\end{enumerate}
\end{assum}

The eigenvectors $\{\desCs{\evec}_i\}$ and $\{\desCs[*]{\evec}_i\}$ of $\desCs{\EA}$ and $\desCs[*]{\EA}$
are assumed to be normalized, such that $\scal{\desCs{\evec}_i}{\desCs[*]{\evec}_i}=1$, $i\ge 1$.
Furthermore, $\scal{\desCs{\evec}_i}{\desCs[*]{\evec}_j}=0, \, i\neq j$ follows from Assumption~\ref{hypo:ad_spec}.
In order to use a perturbation result from \citep{XuSallet1996siam} the
input operator must satisfy the following condition.
\begin{assum}
  \label{hypo:xu_sallet_h3}
Let $d_i,\,i\in\Nats$ be the distance from the eigenvalue $\desCs{\eval}_i\in\sigma(\desCs{\EA})$ to the
rest of the spectrum $\sigma(\desCs{\EA})$,
$\desCs{D}_i=\{z\in\Compl\,|\,\frac{d_i}{3}>|z-\desCs{\eval}_i|\}$ the disk centered at
$\desCs{\eval}_i$ and
$\desCs{D} = \bigcup_{i=1}^\infty \desCs{D}_i$ the union of these disks.
  The coefficients $\desCs{\mb}_i=\dualdc{\desCs{\EB}}{\desCs[*]{\evec}_i}$ ($i\in\Nats$) 
    of the modal expansion of the input operator and the eigenvalues $\desCs{\eval}_i$ ($i\in\Nats$) satisfy
    \begin{align*}
    \sum_{i=1}^\infty \left|\frac{\desCs{\mb}_i}{\eval - \desCs{\eval}_i}\right|^2 \le M < \infty,
    && \forall \eval\not\in D=\desCs{D}.
    \end{align*}
    for an appropriately chosen positive constant $M$.
\end{assum}
\section{Observer approximation}
\label{sec:approx}

In this section, a modal approximation scheme for the observer will be developed, which ensures that the boundary action of $\uu(t)$ and $\yy(t)$
is directly taken into account in the approximation.
Furthermore, the \gls{rs} property of the closed loop,
with \gls{fosf}, will be shown.

\subsection{Modal observer approximation}
\label{sec:approx_mod}

Theorem~1 from~\citep{XuSallet1996siam} ensures, due to Assumption~\ref{hypo:ad_spec}, that $\obsIs{\EA}$ possesses the spectral properties
\ref{item:spec_riesz} and \ref{item:spec_disc}, but not necessarily \ref{item:spec_simple}.
Therefore, $\obsIs{\EA}$ may have non-simple eigenvalues.
In order to avoid generalized eigenvectors
the following assumption is made.
\begin{assum}
  \label{assum:simple_ev}
  The operator $\obsIs{\EA}$ of the observer intermediate system has only simple eigenvalues.
\end{assum}
In the sequel, the eigenvectors
$\{\obsIs{\evec}_i\}$ and $\{\obsIs[*]{\evec}_i\}$ of $\obsIs{\EA}$ and $\obsIs[*]{\EA}$
are assumed to be normalized, such that $\scal{\obsIs{\evec}_i}{\obsIs[*]{\evec}_i}=1$, $i\ge 1$.

To preserve the correct output equation
of the observer~\eqref{eq:observer_sigma} the approximation has to be written on the dense subspace $D(\SysOpObs)$ of the product space $\Ss\times\Is\times\Os$. This way, the inhomogeneous boundary conditions involving $\uu(t)$ and $\yy(t)$ are directly taken care of in the approximation procedure.
This is achieved using the following ansatz for $\sh$:
\begin{align}
  \label{eq:inh_approx}
\sh_\inh(t) = \sum_{i=1}^{n}\ms_i^n(t)\obsIs{\evec}_i + \uu(t) \evecu +\yy(t) \evecy.
\end{align}
Therein, $\evecu$ and $\evecy$ are defined by 
  $\evecu = \Resol(\obsIs{\EA},\eval_\uu)\obsIs{\EB}$ and $\evecy = -\Resol(\obsIs{\EA},\eval_\yy)\Lu$,
for some (fixed) $\eval_\uu,\eval_\yy\not\in\sigma(\obsIs{\EA})$. They satisfy
\begin{align*}
  (\evecu, 1, 0)\in D(\SysOpObs), && (\evecy, 0, 1)\in D(\SysOpObs).
\end{align*}
Since the injections of $\uu(t)$ and $\yy(t)$ are already considered in $\sh_\inh(t)$,
the approximation scheme can be derived from the following duality pairing
\begin{multline*}
\dualo{\dot{\sh}_\inh(t)}{\obsIs[*]{\evec}_j} = 
\dualo{\obsIs{\EA} \sh_\inh(t)+ \Lb\yt(t)}{\obsIs[*]{\evec}_j}.
\end{multline*}
This results in the differential equation
\begin{subequations}
\label{eq:obs_mod_approx_orig}
\begin{multline}
\dot{\ms}(t) + \MBe\dot{\uu}(t) + \MGe\dot{\yy}(t) = \\
\obsIs{\MA} \ms(t) + \MBn\uu(t) + \MGn\yy(t) + \ML\yt(t)
\end{multline}
and the output equation
\begin{align}
\yh(t) = \MC^\Transpose \ms(t) + \cu \uu(t) + \cy \yy(t),
\end{align}
\end{subequations}
where $\ms(t) = (\ms_1^n(t),...,\ms_n^n(t))^\Transpose\in\Compl^n$ and
\begin{align*}
\obsIs{\MA}&=\text{diag}(\obsIs{\eval}_1,...,\obsIs{\eval}_n), && \obsIs{\MA}\in \Compl^{n\times n} \\
(\MBe)_i&=\scal{\evecu}{\obsIs[*]{\evec}_i}, && \MBe\in\Compl^n \\
(\MBn)_i&=\dualo{\obsIs{\EA}\evecu}{\obsIs[*]{\evec}_i}, && \MBn\in\Compl^n \\
(\MGe)_i&=\scal{\evecy}{\obsIs[*]{\evec}_i}, && \MGe\in\Compl^n \\
(\MGn)_i&=\dualo{\obsIs{\EA}\evecy}{\obsIs[*]{\evec}_i}, && \MGn\in\Compl^n \\
(\ML)_i&=\scal{\Lb}{\obsIs[*]{\evec}_i}, && \ML\in\Compl^n\\
(\MC)_i&=(\SysOpObs(\obsIs{\evec}_i,0,0))_2, && \MC\in\Compl^n\\
  \cu &= (\SysOpObs(\evecu,1,0))_2 \\
  \cy &= (\SysOpObs(\evecy,0,1))_2.
\end{align*}
Applying the generalized state transform
\begin{align*}
  \ms(t) &= \msh(t) - \MBe\uu(t) - \MGe\yy(t)
\end{align*}
the observer approximation~\eqref{eq:obs_mod_approx_orig} appears in the form
\begin{subequations}
\label{eq:obs_mod_approx}
\begin{align}
\label{eq:obs_mod_approx_ode}
  \dot{\msh}(t) &= \obsIs{\MA} \msh(t) + \MBz\uu(t) + \MGz\yy(t) + \ML\yt(t) \\
\label{eq:obs_mod_approx_out}
  \yh(t) &= \MC^\Transpose \msh(t) + \cuh \uu(t) + \cyh \yy(t),
\end{align}
\end{subequations}
with
\begin{align*}
  \MBz =& \MBn - \obsIs{\MA}\MBe, & \MGz &= \MGn - \obsIs{\MA}\MGe, \\
  \cuh =& \cu - \MC^\Transpose\MBe, & \cyh &= \cy - \MC^\Transpose\MGe.
\end{align*}

\subsection{Finite-dimensional observer-based state feedback}
\label{sec:approx_fb}

By using $\sh_\inh(t)$ for the \gls{fosf},
the feedback reads
\begin{align*}
 \uu(t)\! &= \!\BKu\s(t) \!+\! \Kb\sh_\inh(t) \!
 = \!\BKu\s(t) \!+ \!\MK^\Transpose\ms(t) \!+ \!\ku\uu(t) \!+ \!\ky\yy(t) \\
 &= \!\BKu\s(t) \!+ \!\MK^\Transpose\msh(t) \!+ \!(\ku \!- \!\MK^\Transpose\MBe)\uu(t)
 \!+ \!(\ky \!- \!\MK^\Transpose\MGe)\yy(t),
\end{align*}
with
\begin{align*}
  (\MK)_i = \Kb\obsIs{\evec}_i=\obsIs{k}_i,\,\MK\in\Compl^n, &&
  \ku = \Kb\evecu, &&
  \ky = \Kb\evecy.
\end{align*}
For the purpose of implementation,
this has to be rearranged to
\begin{align}
  \label{eq:inh_fb}
 \uu(t) =  \BKu_\inh\s(t) + \MK_\inh\msh(t),
\end{align}
with
\begin{align*}
  \BKu_\inh &= (1-\ku + \MK^\Transpose\MBe)^{-1}\big(\BKu + (\ky - \MK^\Transpose\MGe)\BC\big) \\
  \MK_\inh &= (1-\ku + \MK^\Transpose\MBe)^{-1}\MK^\Transpose.
\end{align*}
Obviously, the unbounded part $\BKu_\inh$ of this feedback
differs from $\BKu$. However, in many cases like in the hyperbolic
case, described in~\cite[Definition~6.3]{RiesmeierWoittennek2022arxiv1},
the unbounded part of the feedback determines the eigenvalue asymptotics.
Especially in the case of an admissible input operator,
it is not possible to change the eigenvalue asymptotics by bounded linear feedback~\citep[cf.][]{XuSallet1996siam}.
Therefore, $\sh_\inh(t)$ should not be used
within approximated feedback, at least in the general case.

To circumvent the above-described problem, the standard modal approximation
\begin{align}
  \label{eq:hom_approx}
\sh_\homm(t) = \sum_{i=1}^{n}{\msh_i^n}(t)\obsIs{\evec}_i \in D(\obsIs{\EA})
\end{align}
can be used for \gls{fosf}.
It can be easily verified that the weights, required for $\sh_\homm(t)$ are already available,
since they correspond to the state elements of the finite-dimensional observer~\eqref{eq:obs_mod_approx}:
$\msh(t) = (\msh_1^n(t),...,\msh_n^n(t))^\Transpose\in\Compl^n$.
That $\sh_\homm(t)$ cannot exactly represent the action of $\uu(t)$ and $\yy(t)$ at the boundary
in~\eqref{eq:ss_obs} is not a problem when approximating the bounded operator $\Kb\sh$ via $\Kb\sh_\homm$.
With the resulting \gls{fosf}
\begin{align}
  \label{eq:hom_fb}
  \uu(t) = \BKu\s(t) + \Kb\sh_\homm(t) = \BKu\s(t) + \MK^\Transpose\msh(t)
\end{align}
the unbounded part $\BKu$ of the underlying decomposition $\BKc = \BKu + \Kb$ will be preserved.

\subsection{Properties of the closed-loop system}
\label{sec:approx_cl}
At this point, all ingredients for the \gls{fosf} are prepared.
The closed-loop system consist of the 
system~\eqref{eq:sigma_system}, the observer~\eqref{eq:obs_mod_approx}
and the feedback~\eqref{eq:hom_fb}.
Therewith, the system operator $\clS{\EA}$ of the closed loop
\begin{align}
  \label{eq:closed_loop_system}
\begin{pmatrix}\dot{\s}(t) \\ \dot{\msh}(t)\end{pmatrix}
=
\clS{{\EA}}
\begin{pmatrix}\s(t) \\ \msh(t)\end{pmatrix}, && \begin{pmatrix}\s(t) \\ \msh(t)\end{pmatrix} \in \tilde{\Ss}=\Ss\times\Compl^n
\end{align}
is given by
\begin{multline*}
  \clS{\EA}=
  \begin{pmatrix}
  \BA & 0 \\
  \clS{\EA}_3 & \clS{\EA}_4
  \end{pmatrix}:\tilde{\Ss}\supset D(\clS{\EA}) \to \tilde{\Ss},\\
D(\clS{\EA})\! = \!
\{(h_\s, h_{\msh})\!\in \!D(\BA)\!\times\!\Compl^n | \BR h_\s\! =\! \BKu h_\s\! +\! \MK^\Transpose h_{\msh} \}
\end{multline*}
with
\begin{align*}
  \clS{\EA}_3 &= \MBz\BK + \ML\hat\BC + \MGz\BC, & 
  \hat\BC &= \cuh{\BK} + \cyh\BC \\
  \clS{\EA}_4 &= \obsIs{\MA} + \MBz\MK^\Transpose + \ML\hat\MC^\Transpose, &
  \hat\MC &= \MC + \cuh{\MK}.
\end{align*}

However, for subsequent considerations, it is useful to
consider $\clS{\EA}$ in the form
\begin{align*}
  \clS{\EA}=
  \underbrace{
    \begin{pmatrix}
      \desCs{\EA} & 0                            \\
      \clS{\hat\EA}_3 + \tilde{\MB}\Kb  & \clS{\EA}_4 - \tilde{\MB}\MK^\Transpose
    \end{pmatrix}
  }_{\tilde{\EA}}
  +
  \underbrace{
    \begin{pmatrix}
      \ctrlIs{\EB} \\ \tilde{\MB}
    \end{pmatrix}
  }_{\tilde{\EB}}
  \underbrace{
    \begin{pmatrix}
      -\Kb & \MK^\Transpose
    \end{pmatrix}
  }_{\tilde{\Kb}},
\end{align*}
with
\begin{align*}
  \clS{\hat\EA}_3 h = \clS{\EA}_3 h, && h\in  D(\clS{\hat{\EA}}_3) =D(\desCs{\EA}),
\end{align*}
$\tilde{\EA}:\tilde{\Ss}\supset D(\tilde{\EA})\rightarrow \tilde{\Ss}$,
$D(\tilde{\EA})=D(\desCs{\EA})\times\Compl^n$, input operator $\tilde{\EB}\in D(\tilde{\EA})'$
and bounded feedback $\tilde{\Kb}\in\LOp(\tilde\Ss,\Is)$.
Moreover, $\tilde{\MB}$ is an arbitrary vector
from $\Compl^n$ that places the eigenvalues of $\tilde{\EA}_4 = \clS{\EA}_4 - \tilde{\MB}\MK^\Transpose \in \Compl^{n\times n}$
such that, they are simple and distinct from $\sigma(\desCs{\EA})$.
\begin{thm}
\label{thm:foa_generation}
The closed-loop operator $\clS{\EA}$ is a generator of a \CN-semigroup,
is \gls{rs} and has compact resolvent.
\end{thm}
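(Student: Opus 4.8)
The plan is to exploit the additive decomposition $\clS{\EA}=\tilde{\EA}+\tilde{\EB}\tilde{\Kb}$ prepared above: I would first establish that the unperturbed operator $\tilde{\EA}$ is \gls{rs}, discrete and a \CN-generator, and then transfer these properties to $\clS{\EA}$ via the perturbation result of \citep{XuSallet1996siam}, using that $\tilde{\Kb}\in\LOp(\tilde\Ss,\Is)$ is bounded while the entire unbounded action has been collected in $\tilde{\EB}\in D(\tilde{\EA})'$. Once $\clS{\EA}$ is known to be \gls{rs} with eigenvalues of uniformly bounded real part, compactness of the resolvent follows from discreteness and the generation of a \CN-semigroup from the spectrum-determined growth of \gls{rs} operators, which is precisely what makes the three asserted properties a single package.

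For the unperturbed operator, I would exploit that $\tilde{\EA}$ is block lower-triangular, with upper-left block the desired operator $\desCs{\EA}$ — \gls{rs}, discrete and with simple eigenvalues $\{\desCs{\eval}_i\}$ by Assumption~\ref{hypo:ad_spec} — and lower-right block the matrix $\tilde{\EA}_4\in\Compl^{n\times n}$, whose eigenvalues are, by the choice of $\tilde{\MB}$, simple and disjoint from $\sigma(\desCs{\EA})$. Hence $\sigma(\tilde{\EA})=\sigma(\desCs{\EA})\cup\sigma(\tilde{\EA}_4)$ is a set of simple, isolated eigenvalues accumulating only at infinity. For each $\desCs{\eval}_i$ the eigenvector is $(\desCs{\evec}_i,w_i)$, where the tail $w_i$ solves $(\desCs{\eval}_i-\tilde{\EA}_4)w_i=(\clS{\hat\EA}_3+\tilde{\MB}\Kb)\desCs{\evec}_i$ — uniquely, since $\desCs{\eval}_i\notin\sigma(\tilde{\EA}_4)$ — while the remaining $n$ eigenvalues of $\tilde{\EA}_4$ yield eigenvectors of the form $(0,v_j)$. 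Because $\{\desCs{\evec}_i\}$ is already a Riesz basis of $\Ss$ and only finitely many vectors $(0,v_j)$ are appended, verifying that this family is a Riesz basis of $\tilde\Ss=\Ss\times\Compl^n$ reduces to checking that the block-triangular map sending $\{(\desCs{\evec}_i,0)\}$ to $\{(\desCs{\evec}_i,w_i)\}$ is bounded with bounded inverse, i.e.\ that $\sum_i\|w_i\|^2<\infty$.

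The square-summability of the tails $w_i$ is, I expect, the main obstacle, as it is the only point where the unbounded coupling $\clS{\hat\EA}_3+\tilde{\MB}\Kb$ must be controlled. Since $\|(\desCs{\eval}_i-\tilde{\EA}_4)^{-1}\|$ decays like $|\desCs{\eval}_i|^{-1}$ as $i\to\infty$, while the growth of the coupling applied to $\desCs{\evec}_i$ is governed by the same modal coefficients appearing in Assumption~\ref{hypo:xu_sallet_h3}, I would derive $\sum_i\|w_i\|^2<\infty$ directly from the summability that Assumption~\ref{hypo:xu_sallet_h3} imposes on $\desCs{\mb}_i=\dualdc{\desCs{\EB}}{\desCs[*]{\evec}_i}$. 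With $\tilde{\EA}$ thereby shown to be \gls{rs} and discrete, the final step is to apply \citep[Theorem~1]{XuSallet1996siam} to the pair $(\tilde{\EA},\tilde{\EB})$: the feedback $\tilde{\Kb}$ is bounded, and the modal coefficients of $\tilde{\EB}$ inherit the estimate of Assumption~\ref{hypo:xu_sallet_h3} — the $\Ss$-component $\ctrlIs{\EB}$ being tied to $\desCs{\EB}$ through the bounded feedback and the $\Compl^n$-component $\tilde{\MB}$ being finite-dimensional — so that $\clS{\EA}=\tilde{\EA}+\tilde{\EB}\tilde{\Kb}$ remains \gls{rs}. Discreteness then gives the compact resolvent, and the spectrum-determined growth condition the \CN-semigroup, which completes the argument.
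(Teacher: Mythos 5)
Your overall strategy coincides with the paper's: write $\clS{\EA}=\tilde{\EA}+\tilde{\EB}\tilde{\Kb}$, show that the block-triangular $\tilde{\EA}$ inherits the spectral properties \ref{item:spec_riesz}--\ref{item:spec_simple} from $\desCs{\EA}$ and $\tilde{\EA}_4$, verify Assumption~\ref{hypo:xu_sallet_h3} for the pair $(\tilde{\EA},\tilde{\EB})$ by splitting off the finitely many eigenvalues of $\tilde{\EA}_4$, and then invoke the perturbation machinery of \citep{XuSallet1996siam} (in the paper, via Lemmas~3.1 and~3.2 of \citep{RiesmeierWoittennek2022arxiv1}) to conclude generation, the \gls{rs} property and compactness of the resolvent. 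So the architecture is right.

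The one step with a genuine gap is your proof that the eigenvectors of $\tilde{\EA}$ form a Riesz basis. You work with the primal eigenvectors $(\desCs{\evec}_i,w_i)$ and correctly identify $\sum_i\|w_i\|^2<\infty$ as the crux, but the source you propose for this bound does not deliver it: Assumption~\ref{hypo:xu_sallet_h3} controls the \emph{input} coefficients $\desCs{\mb}_i=\dualdc{\desCs{\EB}}{\desCs[*]{\evec}_i}$, i.e.\ pairings of $\desCs{\EB}$ with the \emph{adjoint} eigenvectors, whereas $w_i=(\desCs{\eval}_i I-\tilde{\EA}_4)^{-1}\bigl(\clS{\hat\EA}_3+\tilde{\MB}\Kb\bigr)\desCs{\evec}_i$ involves boundary and feedback traces of the \emph{primal} eigenvectors (terms like $\BC\desCs{\evec}_i$ and the unbounded feedback part applied to $\desCs{\evec}_i$). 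There is no a priori relation between these two families of coefficients, so on your route an additional estimate or assumption would be needed. The paper sidesteps the issue by passing to the adjoint: $\tilde{\EA}^*$ is block \emph{upper}-triangular, so the infinitely many eigenvectors associated with $\sigma(\desCs[*]{\EA})$ are exactly $(\evecI(\tilde{\eval}_i),0)$ with no tail, and only the $n$ eigenvectors associated with $\tilde{\EA}_4^*$ acquire an $\Ss$-component; the adjoint eigenvector family is therefore a finite-rank (hence bounded, and here boundedly invertible) perturbation of a reference Riesz basis of $\tilde{\Ss}$, and the primal eigenvectors form a Riesz basis as its biorthogonal system. If you reorganize your second paragraph around $\tilde{\EA}^*$ --- which you need anyway to compute $\tilde{\mb}_i$ for the summability check --- the square-summability obstacle disappears and the rest of your argument goes through essentially as written.
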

\begin{proof}
$\desCs{\EA}=\ctrlIs{\EA} + \ctrlIs{\EB}\Kb$ and $\tilde{\EA}_4$ have the spectral properties~\ref{item:spec_riesz}-\ref{item:spec_simple}.
Since $\tilde{\EA}$ is closed and the eigenvectors form a Riesz-basis of $\tilde{\Ss}$,
also $\tilde{\EA}$ has the spectral properties~\ref{item:spec_riesz}-\ref{item:spec_simple}.
Therewith, it follows from \cite[Lemma~3.1]{RiesmeierWoittennek2022arxiv1},
that the control system $(\tilde{\EA}, \tilde{\EB})$ with bounded linear
feedback $\tilde{\Kb}$, belongs to the system class considered in~\citep{XuSallet1996siam}
and~\citep{RiesmeierWoittennek2022arxiv1}.

Now it will be shown that, if Assumption~\ref{hypo:xu_sallet_h3}
holds in terms of $(\desCs{\EA},\ctrlIs{\EB})$,
it also holds in terms of $(\tilde{\EA},\tilde{\EB})$.
This means that it exists an $\tilde{M}\in\Reels^+$ such that
\begin{align*}
\sum_{i=1}^\infty \left|\frac{\tilde{\mb}_i}{\eval - \tilde{\eval}_i}\right|^2 \le \tilde M < \infty,
&& \forall \eval\not\in \tilde D = \bigcup_{i=1}^\infty \tilde D_i,
\end{align*}
with the disks $\{\tilde D_i\}$ (with radius $\frac{\tilde d_i}{3}$) centered at the eigenvalues of $\tilde{\EA}$
and distance $\tilde d_i,\, i\in\Nats$ from $\tilde{\eval}_i\in\sigma(\tilde{\EA})$ to the rest of the spectrum $\sigma(\tilde{\EA})$.
The adjoint of $\tilde{\EA}$ is given by
\begin{align*}
  \tilde{\EA}^* = \begin{pmatrix}
    \desCs[*]{\EA} &  \tilde{\EA}_3^* \\
    0 & \tilde{\EA}_4^*
  \end{pmatrix}: \tilde{\Ss}\supset D(\tilde{\EA}^*)\rightarrow \tilde{\Ss},
\end{align*}
and the eigenvectors $\{\tilde{\evec}_i^*\}$ of $\tilde{\EA}^*$ are given by
\begin{align*}
  \tilde{\evec}_i^* = \left\{
    \begin{array}{l}
        \big((\tilde{\eval}_i \Id_\Ss - \desCs[*]{\EA})^{-1} \tilde{\EA}_3^* \evecII(\tilde{\eval}_i),
        \evecII(\tilde{\eval}_i)\big)
      ,\\
        \big(\evecI(\tilde{\eval}_i), 0\big)
    \end{array}
  \right.
    &&
    \begin{array}{l}
      \vphantom{
        \big((\tilde{\eval}_i \Id_\Ss - \desCs[*]{\EA})^{-1} \tilde{\EA}_3^* \evecII(\tilde{\eval}_i),
        \evecII(\tilde{\eval}_i)\big)}\!\!\!\!\!\!
      i = 1,...,n \\
      \vphantom{
      \big(\evecI(\tilde{\eval}_i), 0\big)}\!\!\!\!\!\!
      i > n,
    \end{array}
\end{align*}
with $\tilde{\eval}_1,...,\tilde{\eval}_n\in\sigma(\tilde{\EA}_4)$, the eigenvector $\evecII(\tilde{\eval}_i)$ of $\tilde{\EA}^*_4$
and the eigenvector $\evecI(\tilde{\eval}_i)$ of $\desCs[*]{\EA}$.
Now $\tilde{\mb}_i = \dualf{\tilde{\EB}}{\tilde{\evec}^*_i}{\tilde{\EA}^*}$, $i\ge 1$ can be determined,
and the sum can be majorized by
\begin{align*}
  \sum_{i=1}^\infty \left|\frac{\tilde{\mb}_i}{\eval - \tilde{\eval}_i}\right|^2
  \le
  \sum_{i=1}^n \left|\frac{\tilde{\mb}_i}{\frac{\tilde{d}_i}{3}}\right|^2
  +
  \sum_{i=1}^\infty \left|\frac{\desCs{\mb}_i}{\eval - \desCs{\eval}_i}\right|^2
  \le \tilde M < \infty,
\end{align*}
with $\tilde{M} = 9\sum_{i=1}^n \left|\frac{\tilde{\mb}_i}{\tilde{d}_i}\right|^2 + M$
and $M$ from Assumption~\ref{hypo:xu_sallet_h3}.

Lemma~3.1 and Lemma~3.2 of \citep{RiesmeierWoittennek2022arxiv1}
are formulated in terms of $(\desCs{\EA},\ctrlIs{\EB},-\Kb;\ctrlIs{\EA})$.
Now they can be applied in terms of $(\tilde{\EA},\tilde{\EB},\tilde{\Kb};\clS{\EA})$,
which completes the proof.
\end{proof}

\subsection{Numerical computation of the eigenvalues}
\label{sec:approx_ev}
The eigenproblem $\clS{\EA}\evec = \eval\evec$, $\evec\in\tilde{\Ss}$ is considered.
To give further insights into the
structure of the eigenproblem,
the eigenvector $\evec=(\hat\evec,\check\evec)$ is decomposed
into $\hat\evec\in\Ss$ and $\check\evec\in\Compl^n$:
\begin{subequations}
\begin{align}
  \label{eq:num_comp_ep_infinite_part}
  \eval\hat\evec &= \BA\hat\evec, && \ctrlIs{\BR}\hat\evec = \MK_n^\Transpose\check\evec \\
  \label{eq:num_comp_ep_finite_part}
  \eval\check\evec &=
  \clS{\EA}_4\check\evec
  + \clS{\EA}_3\hat\evec.
\end{align}
\end{subequations}
Using a solution\footnote{Depending on $\BA$ this solution can be computed by using standard initial- or boundary-value problem solver.
If no solution exists, $\eval$ is not an eigenvalue.} $\hat\evec=\evecI(\eval)$ of $\BA\hat\evec = \eval\hat\evec$,
it becomes clear that $\eval$ is an eigenvalue of $\clS{\EA}$ if the overdetermined
system of equations
\begin{align}
  \label{eq:num_comp_foa_char_eq1}
  \underbrace{
  \begin{pmatrix}
   \MK_n^\Transpose  \\
   \Resol(\clS{\EA}_4, \eval)
  \end{pmatrix}}_{M(\eval)\in\Compl^{n+1\times n}}
  \check\evec
  =
  \underbrace{
  \begin{pmatrix}
    \ctrlIs{\BR} \\ \clS{\EA}_3
  \end{pmatrix}\evecI(\eval)}_{N(\eval)\in\Compl^{n+1}}
\end{align}
has a solution $\check\evec$.
Hence, a characteristic equation of the eigenproblem is
\begin{align*}
  \|M(\eval)M(\eval)^+N(\eval) - N(\eval)\| = 0,
\end{align*}
where $M(\eval)^+$ is the Moore-Penrose inverse of $M(\eval)$.

Another numerically more robust approach consists in rearranging \eqref{eq:num_comp_ep_finite_part} to
  $\check\evec = \Resol(\clS{\EA}_4,\eval) \clS{\EA}_3 \evecI(\eval), \, \eval\not\in\sigma(\clS{\EA}_4)$,
and derive a characteristic equation from the \gls{bc}
of~\eqref{eq:num_comp_ep_infinite_part}:
\begin{align}
  \label{eq:char_eq2}
  \big(\ctrlIs{\BR} - \MK_n^\Transpose \Resol(\clS{\EA}_4,\eval) \clS{\EA}_3\big) \evecI(\eval)=0, && \eval\not\in\sigma(\clS{\EA}_4).
\end{align}
The roots of this characteristic equation represent the infinite part of the point spectrum $\sigma_p(\clS{\EA})$.
Apart from this infinite part, the elements of the finite set $\sigma(\clS{\EA}_4)$ may also belong to
$\sigma_p(\clS{\EA})$. Therefore, it has to checked additionally, for each $\eval\in\sigma(\clS{\EA}_4)$,
whether \eqref{eq:num_comp_foa_char_eq1} has a solution $\check\evec$.

\subsubsection{Application to the example system.}
\label{sec:application}
\begin{table}[hb]
  \begin{center}
  \caption{Parameters for system~\eqref{eq:ex_sys}, used in Section~\ref{sec:approx}, with $\tau=(\ca\cb)^{-\frac{1}{2}}$.}
  \label{tab:params}
  \begin{tabular}{c|c|c|c|c|c|c}
$\ca$ $\,\,$ & $\,\,$ $\cb$ $\,\,$ & $\,\,$ $\cc$ $\,\,$ & $\,\,$ $\muc$ $\,\,$ & $\,\,$ $\kappac$ $\,\,$ & $\,\,$ $\muo$ $\,\,$ & $\,\,$ $\kappao$\\
\hline
\rule{0pt}{8pt}
$11$ $\,\,$ & $\,\,$ $21$ $\,\,$ & $\,\,$ $31$ $\,\,$ & $\,\,$ $e^{-20\tau}$ $\,\,$ & $\,\,$ $15$ $\,\,$ & $\,\,$ $e^{-60\tau}$ $\,\,$ & $\,\,$ $35$
  \end{tabular}
  \end{center}
\end{table}
For the example system~\eqref{eq:ex_sys} with controller and observer
according to Sections~\ref{sec:late_lumping_coc}--\ref{sec:late_lumping_ooc}, and parameters from Table~\ref{tab:params}, the spectrum of the closed-loop
system~\eqref{eq:closed_loop_system} will be computed. To this end, the second of the above-described approaches will be employed,
which is based on the characteristic equation~\eqref{eq:char_eq2}.
Fig.~\ref{fig:obs} shows the closed-loop spectrum for different approximation orders.
Apparently the spectrum $\sigma(\clS{\EA})$ converge to the desired spectra $\sigma(\desCs{\EA})$ and $\sigma(\desOs{\EA})$.
Of course, a proof of spectral convergence, as provided for feedback approximation
in~\cite[Theorem~3.4]{RiesmeierWoittennek2022arxiv1}, remains open for the proposed approximation scheme.
Nevertheless, the obtained numerical results, together with Riesz spectral property the closed-loop system (cf. Theorem~\ref{thm:foa_generation}),
suggests that exponential stability with a certain stability margin can be ensured with the proposed scheme. Furthermore, according to Fig.~\ref{fig:obs}
the required approximation orders are rather small.

\begin{figure}
\begin{center}
\includegraphics[width=\linewidth]{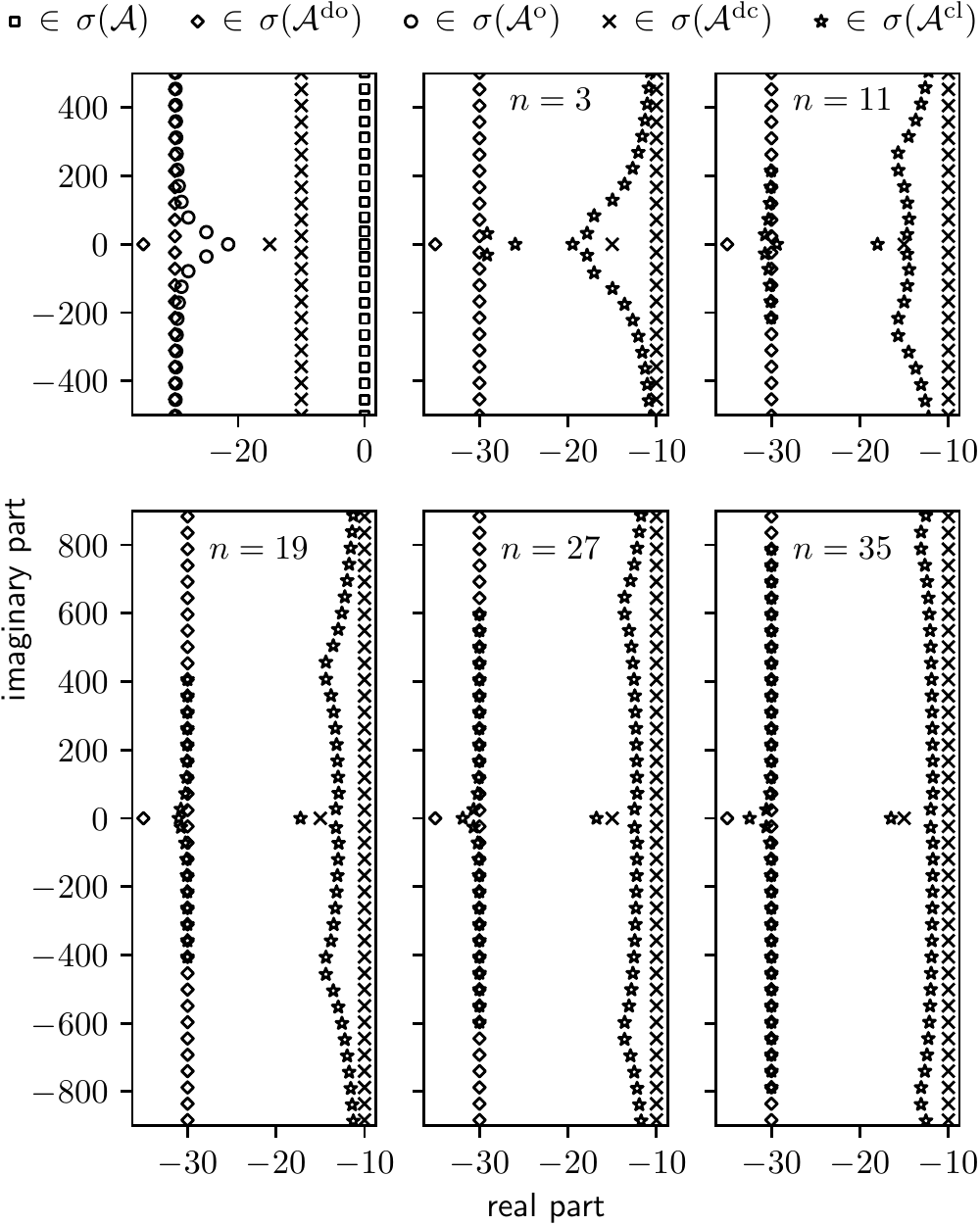}
\end{center}
\caption{Different spectra for of the example system~\eqref{eq:ex_sys}. Upper
left: $\sigma(\EA)$, $\sigma(\desCs{\EA})$, $\sigma(\desOs{\EA})$,
$\sigma(\obsIs{\EA})$.  Other plots: $\sigma(\clS{\EA})$ for different
approximation orders, together with desired closed-loop spectra
$\sigma(\desCs{\EA})$, $\sigma(\desOs{\EA})$.}
\label{fig:obs}
\end{figure}
\section{Conclusion}
\label{sec:conclusion}

An observer approximation scheme is proposed, which avoids possibly existing
deviations in the output equation of standard modal approximation schemes.
By combining this observer with late-lumping state feedback and late-lumping observer output injection,
both described in \citep{RiesmeierWoittennek2022arxiv1}, an approximation scheme for \gls{fosf}
is established, which can be applied to the considered class of \gls{bcs}.
For the closed-loop system operator, the \gls{rs} property is proven.
Therefore, the spectrum-determined growth condition holds.
In contrast to previous results from \citep{Curtain1984siam,Deutscher2013IJC,GrueneMeurer2022},
the results not only hold for analytic systems but for rather general system operators
satisfying the Assumptions~\ref{hypo:ad_spec} and \ref{hypo:xu_sallet_h3}. In particular, this includes certain hyperbolic systems.

\end{document}